\newtheorem{lemma}{Lemma}[section]
\newtheorem{remark}[lemma]{Remark}
\newtheorem{theorem}[lemma]{Theorem}
\newtheorem{definition}[lemma]{Definition}
\newtheorem{claim}[lemma]{Claim}
\newtheorem{observation}[lemma]{Observation}
\newtheorem{corollary}[lemma]{Corollary}
\newtheorem*{remark*}{Remark}
\makeatletter \@addtoreset {equation}{section}
\renewcommand\theequation
\z@ \arabic{section}.\arabic{subsection}.\arabic{equation}
  \else \arabic{section}.\arabic{equation} \fi}
\newcommand{\RNum}[1]{\uppercase\expandafter{\romannumeral #1\relax}}
\begin{document}

\title{On Fabry's quotient theorem}

\author{Lev Buhovsky}


\date{\today}
\maketitle

\begin{abstract}
We present a short proof of the Fabry quotient theorem, which states that for a complex power series with unit radius of convergence, if the quotient of its consecutive coefficients tends to $ s $, then the point $ z=s $ is a singular point of the series. This proof only uses material from undergraduate university studies. 
\end{abstract}

\section{Introduction}

Fabry's celebrated theorems detect singular points of power series on the boundary of the disc of convergence and provide large classes of Taylor series, which cannot be analytically continued through any arc of the boundary circle. The reader will find these theorems in the Dienes and Bieberbach treatises \cite[Sections~93-94]{D}, \cite[Chapter 2]{B}. For more recent results and references, see for instance, papers by Arakelian and Martirosyan \cite{AM1,AM2}, by Arakelian, Luh and M\"uller \cite{ALM}, and by Eremenko \cite{Erem1,Erem2}. 

Fabry's theorems are known for their formidable formulations and complicated proofs. The original Fabry's proofs were quite ingenious and long but used only basic properties of Taylor series \cite[Section 2.1]{B}. Faber and then P\'olya developed another approach to general Fabry theorems which is based on the interpolation of the coefficients of the Taylor series by an entire function and on connection between growth of entire functions and distribution of their zeroes. This approach is well explained in the mentioned above books by Dienes and Bieberbach. 

Probably, the most known consequence of general Fabry's theorems is {\em Fabry's gap theorem}, which has numerous connections with other areas of analysis and has attracted attention of many prominent mathematicians, see for instance, \cite[Section 32]{PW}, \cite[Chapter 2]{B}, \cite[Chapter \RNum{12}]{E}, \cite[Sections \RNum{2}.4.6-\RNum{2}.4.10]{HJ}.

Another remarkable consequence of a general Fabry theorem is {\em Fabry's quotient theorem} whose elegant formulation deserves to be included into the courses of basic complex analysis:
\begin{theorem} \label{thm:fabry-quotient}
Let $ f(z) = \sum_{n=0}^\infty a_n z^n $ be a complex power series with unit radius of convergence.  Assume that $ \lim_{n \rightarrow \infty} \frac{a_n}{a_{n+1}} = s $. Then $ z = s $ is a singular point of $ f $.
\end{theorem}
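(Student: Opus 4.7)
The plan is to normalize so that $s=1$ and then argue by contradiction using Cauchy's formula on a deformed contour.

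First, since $|a_n/a_{n+1}| \to |s|$ and the radius of convergence equals $1$, the elementary ratio-root comparison forces $|s|=1$. Substituting $z=sw$ produces $\tilde f(w) = \sum (a_n s^n) w^n$, a series of radius $1$ with ratios of consecutive coefficients tending to $1$; the claim for $f$ at $z=s$ is equivalent to the claim for $\tilde f$ at $w=1$, so I may assume $s=1$.

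Now suppose for contradiction that $z=1$ is a regular point of $f$. Then $f$ extends holomorphically to a domain $\mathcal{D} \supset \mathbb{D} \cup B(1,\varepsilon)$ for some $\varepsilon>0$. The plan is to use this extra analyticity via Cauchy's formula
\[
a_n=\frac{1}{2\pi i}\int_{|z|=r}\frac{f(z)}{z^{n+1}}\,dz \qquad (r<1),
\]
to derive an asymptotic for $a_n$ incompatible with $a_n/a_{n+1} \to 1$. I would homotope $|z|=r$ inside $\mathcal{D}$ into a ``dented'' contour $\Gamma$ which follows $|z|=r$ away from $z=1$ and detours outside $\mathbb{D}$, at some level $|z|=1+\eta$, near $z=1$. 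The detour piece contributes only $O((1+\eta)^{-n})$ to the integral and is exponentially negligible; the arc piece carries the leading asymptotics of $a_n$. Heuristically those asymptotics are governed by the singularities of $f$ on $\partial\mathbb{D}\setminus\{1\}$, none of which is located at $z=1$, so the ratio $a_n/a_{n+1}$ should pick up a factor like $\bar\zeta_j$ from some contributing singularity $\zeta_j \neq 1$ and thus cannot tend to $1$.

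The main obstacle is turning this heuristic into a rigorous argument. The function $f$ may have a dense or very complicated singular set on $\partial\mathbb{D}\setminus\{1\}$, so a direct Darboux-style analysis at each singularity is infeasible. The short proof presumably sidesteps this via a clever algebraic device: pairing the Cauchy integrals for $a_n$ and $a_{n+1}$ on the same deformed contour and estimating the combination $a_n - c\,a_{n+1}$ for an appropriate $c$ using only the exponentially small outer piece, or else applying the analytic extension to a cleverly chosen auxiliary such as $(1-z)f(z)$ to reduce the problem to a variant of Pringsheim's positive-coefficient theorem. Identifying this device is the heart of the argument.
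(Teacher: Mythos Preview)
Your proposal is not a proof: you yourself say that ``identifying this device is the heart of the argument'' and leave that device unspecified. Everything up to and including the contour deformation is fine, but the deformation by itself yields nothing. The outer detour contributes $O((1+\eta)^{-n})$, which is negligible, and therefore the remaining arc at $|z|=r$ contributes essentially all of $a_n$; you have just rewritten $a_n$ as itself minus something tiny. To extract the conclusion that $a_n/a_{n+1}$ is governed by some $\bar\zeta_j$ with $\zeta_j\neq 1$ you would need a genuine asymptotic expansion of $a_n$ in terms of boundary singularities, and as you note, for an arbitrary (possibly dense, possibly essential) singular set on $\partial\mathbb{D}\setminus\{1\}$ no such expansion is available. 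Neither of your suggested fixes --- estimating $a_n-c\,a_{n+1}$ on the deformed contour, or passing to $(1-z)f(z)$ and invoking a Pringsheim-type result --- actually works: the first still leaves you with an integral over the inner arc that you cannot control, and the second fails because $(1-z)f(z)$ has coefficients $a_n-a_{n-1}$, about which the hypothesis $a_n/a_{n+1}\to 1$ says nothing useful (they need not be eventually of one sign, nor even bounded in any exploitable way).

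The paper's argument is entirely different and does not attempt to analyse the singular set at all. The hypothesis $a_n/a_{n+1}\to 1$ is used only through its consequence that the \emph{arguments} of consecutive coefficients are eventually close: for any $N$ one can choose $\phi_n=\arg a_n$ so that any $N$ consecutive $\phi_n$ lie in an arc of length $\pi/2$. This ``$N$-goodness'' is then fed into a harmonic-analysis inequality (proved via a trigonometric polynomial with nonnegative coefficients that is negative off a short arc) showing that for an $N$-good Fourier series the full $L^2$ norm on $[-\pi,\pi]$ is controlled by the $L^2$ norm on $[-4\pi/N,4\pi/N]$. If $f$ extended analytically through the arc $\{e^{i\theta}:|\theta|\le 4\pi/N\}$, Cauchy estimates would bound all derivatives of $\theta\mapsto f(re^{i\theta})$ uniformly on that arc, and the concentration inequality would then bound them uniformly on all of $[-\pi,\pi]$, forcing $f$ to extend across the entire circle and contradicting $R=1$. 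Letting $N\to\infty$ pins the singularity at $z=1$. The key idea you are missing is precisely this $L^2$ concentration step; nothing of the contour-deformation or Darboux flavour appears.
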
 

My friend and colleague Misha Sodin recently mentioned to me that he is not aware of a proof of this theorem which might be explained in the first course of complex analysis or included in textbooks. In this short note we provide such a proof, which is based on an idea from harmonic analysis. A somewhat similar approach was used by Wiener for proving a version of Fabry's gap theorem which was weaker than the original one \cite{W}, \cite[Section 32]{PW}, as well as in other instances, see also \cite{S}, \cite[Lemma]{EF}, \cite[Chapter 7, Section 3]{M} (and the Notes to this Chapter). 

It could be that our proof of Fabry's Theorem \ref{thm:fabry-quotient} (or its another short and elementary proof) might be known to experts, or even published, although we didn't find any evidence to that.

\paragraph{Acknowledgements}  I thank A. Borichev, A. Eremenko, D. Khavinson, F. Nazarov and M. Sodin for a valuable feedback. I especially thank F. Nazarov for an important suggestion which led to an improvement of the actual result we show (see the quantitative statement at the beginning of our proof of Theorem \ref{thm:fabry-quotient} in Section \ref{s:proofs}) and to simplification of our original arguments, and M. Sodin for introducing me to Fabry theorems, encouraging me to write this note, and for his generous help with the presentation. This work was partially supported by ERC Starting Grant 757585 and ISF Grant 2026/17.

\section{Proofs} \label{s:proofs}

\begin{definition}
Let $ F(\theta) = \sum_{n=-\infty}^{\infty} a_n e^{in\theta} $ be a formal Fourier series, $ a_n = r_n e^{i \phi_n} $, $ r_n \geqslant 0 $, $ \phi_n \in \mathbb{R} $. Let $ N \geqslant 2 $ be an integer. We say that $ F $ is $ N $-good, if there is a choice of $ (\phi_n)_{n \in \mathbb{Z}} $, such that for each $ n \in \mathbb{Z} $ there exists some $ \Phi_n \in \mathbb{R} $ with $ \phi_n, \phi_{n+1}, \ldots, \phi_{n+N-1} \in [\Phi_n, \Phi_n + \frac{\pi}{2}] $. Similarly, for a formal Taylor series $ f(z) = \sum_{n=0}^{\infty} a_n z^n $ and an integer $ N \geqslant 2 $, we say that $ f $ is $ N $-good if one can write $ a_n = r_n e^{i \phi_n} $, $ r_n \geqslant 0 $, $ \phi_n \in \mathbb{R} $, such that for each $ n \geqslant 0 $ there exists some $ \Phi_n \in \mathbb{R} $ with $ \phi_n, \phi_{n+1}, \ldots, \phi_{n+N-1} \in [\Phi_n, \Phi_n + \frac{\pi}{2}] $. 
\end{definition}

For a trigonometric polynomial $ P(\theta) = \sum_{k = -M}^M c_k e^{ik\theta} $, we will say that $ P $ is symmetric with non-negative coefficients, if $ c_k = c_{-k} \geqslant 0 $ for each $ 0 \leqslant k \leqslant M $.

\newpage

\begin{observation} \label{obs:main}
Let $ F(\theta) =  \sum_{n=-\infty}^{\infty} a_n e^{in\theta} $ be an $ N $-good $ L^2[-\pi,\pi] $ Fourier series, and let $ P(\theta) = \sum_{k = -N+1}^{N-1} c_k e^{ik\theta} $ be symmetric with non-negative coefficients. Then 
\begin{equation}
\begin{gathered}
\int_{-\pi}^\pi |F(\theta)|^2 P(\theta) \, d\theta =  2\pi \sum_{n=-\infty}^\infty \sum_{k=-N+1}^{N-1} a_n \overline{a}_{n+k} c_k  \\ 
= 2\pi c_0 \sum_{n=-\infty}^\infty |a_n|^2 + 4\pi \sum_{n=-\infty}^\infty \sum_{k=1}^{N-1}  c_k \Re (a_n \overline{a}_{n+k}) \geqslant 0.
\end{gathered}
\end{equation}
\end{observation}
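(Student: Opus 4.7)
The plan is to verify the two equalities by direct Fourier-series manipulation and then to derive the non-negativity from the $N$-good hypothesis.

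For the first equality, I would write $|F(\theta)|^2 = F(\theta)\overline{F(\theta)} = \sum_{n,m} a_n \overline{a_m} e^{i(n-m)\theta}$ and multiply by $P(\theta) = \sum_k c_k e^{ik\theta}$. Integrating termwise and applying orthogonality $\int_{-\pi}^\pi e^{ij\theta}\,d\theta = 2\pi \delta_{j,0}$ forces $n - m + k = 0$, so the integral collapses to $2\pi \sum_n \sum_{k=-N+1}^{N-1} a_n \overline{a_{n+k}} c_k$. The interchange of summation and integration is routine: $F \in L^2[-\pi,\pi]$ gives $\sum_n |a_n|^2 < \infty$, $P$ is a finite trigonometric polynomial with only $2N-1$ terms, and Cauchy--Schwarz handles absolute convergence of the double sum.

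For the second equality, I would isolate the $k=0$ contribution, which yields $2\pi c_0 \sum_n |a_n|^2$, and pair the $+k$ and $-k$ terms for $1 \leqslant k \leqslant N-1$. Using $c_{-k} = c_k$ and reindexing $n \mapsto n+k$ in the $-k$ part converts $\sum_n c_{-k} a_n \overline{a_{n-k}}$ into $\sum_n c_k \overline{a_n \overline{a_{n+k}}}$, which combines with the $+k$ part into $2 c_k \Re(a_n \overline{a_{n+k}})$, as claimed.

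The non-negativity is the crucial step, and it is precisely where the $N$-good hypothesis earns its keep; note that a trigonometric polynomial with symmetric non-negative coefficients need not be pointwise non-negative, as $2\cos\theta$ shows, so the bound $\int |F|^2 P \geqslant 0$ does not come for free from any positivity of $P$ itself. By the definition of $N$-good, for every $n$ and every $1 \leqslant k \leqslant N-1$ the phases $\phi_n$ and $\phi_{n+k}$ lie in a common interval $[\Phi_n, \Phi_n + \pi/2]$, hence $|\phi_n - \phi_{n+k}| \leqslant \pi/2$. Therefore $\Re(a_n \overline{a_{n+k}}) = r_n r_{n+k} \cos(\phi_n - \phi_{n+k}) \geqslant 0$, and since $c_k \geqslant 0$ as well, each summand on the right-hand side is non-negative. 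The $k=0$ term is manifestly non-negative, so the claim follows.
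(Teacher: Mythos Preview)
Your proof is correct and is precisely the standard computation the paper leaves implicit (the paper states this as an Observation without proof). The Fourier-side expansion, the pairing of the $\pm k$ terms via $c_{-k}=c_k$, and the key use of $N$-goodness to force $|\phi_n-\phi_{n+k}|\leqslant \pi/2$ and hence $\Re(a_n\overline{a_{n+k}})\geqslant 0$ for $1\leqslant k\leqslant N-1$ are exactly the intended argument.
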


\begin{claim} \label{claim:existence-trigpol}
For every integer $ N \geqslant 2 $ there exists a trigonometric polynomial $ P(\theta) = \sum_{k = -N+1}^{N-1} c_k e^{ik\theta} ,$ symmetric with non-negative coefficients, such that we have $ P(\theta)  < 0 $ for $ \theta \in [-\pi,\pi] \setminus (-\frac{4\pi}{N},\frac{4\pi}{N}) $.
\end{claim}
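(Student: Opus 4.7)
The plan is to construct $P$ explicitly from the Fej\'er kernel. Set $M = \lfloor N/2 \rfloor$ and let
$$K_M(\theta) = \frac{1}{M}\left(\frac{\sin(M\theta/2)}{\sin(\theta/2)}\right)^{\!2} = \sum_{k=-M+1}^{M-1}\Bigl(1 - \tfrac{|k|}{M}\Bigr) e^{ik\theta}$$
denote the Fej\'er kernel of order $M-1$, and take
$$P(\theta) = K_M(\theta)^2 - \tfrac{M}{2}\,K_M(\theta) - C,$$
where $C > 0$ is a sufficiently small constant (for concreteness, $C = (M^2+2)/(12M)$). The degree of $K_M^2$ is $2(M-1) \leq N-1$, so $P$ has the required degree, and its Fourier coefficients are automatically symmetric since $K_M$ is real and even.

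First I will verify that the Fourier coefficients of $P$ are non-negative. Since $\widehat{K_M^2}$ equals the discrete self-convolution of $\widehat{K_M}$, one computes $\widehat{K_M^2}(0) = (2M^2+1)/(3M)$, which exceeds $M/2$ for every $M \geq 1$. More generally, one verifies that the ratio $\widehat{K_M^2}(m)/\widehat{K_M}(m)$ attains its minimum at $m = 0$, so $\widehat{K_M^2}(m) - (M/2)\widehat{K_M}(m) \geq 0$ for every $m$, and the specific choice of $C$ yields $\widehat{P}(0) = (M^2+2)/(12M) > 0$.

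Second I will show $P(\theta) < 0$ for $\theta \in [-\pi,\pi] \setminus (-\tfrac{4\pi}{N}, \tfrac{4\pi}{N})$. Factoring $P(\theta) = K_M(\theta)\bigl(K_M(\theta) - \tfrac{M}{2}\bigr) - C$, it suffices to show $K_M(\theta) \leq M/2$ on this set, for then $K_M(K_M - M/2) \leq 0$ and $P \leq -C < 0$. For $|\theta| \geq 2\pi/M$ (outside the main lobe of $K_M$), the standard pointwise bound $K_M(\theta) \leq 1/(M\sin^2(\theta/2))$ together with $\sin(\theta/2) \geq 2/M$ (valid for $M \geq 2$) gives $K_M(\theta) \leq M/4$. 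For the remaining range $4\pi/N \leq |\theta| < 2\pi/M$, which arises only when $M < N/2$ (i.e.\ $N$ odd with $M = (N-1)/2$), the Fej\'er kernel is monotonically decreasing on the main lobe and $K_M(4\pi/N) = \sin^2(2\pi M/N)/(M\sin^2(2\pi/N)) = O(1/N) \ll M/2$. For $N \in \{2,3\}$ the set on which $P$ must be negative is empty.

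The chief technical ingredient is the Fourier-coefficient inequality. It can be settled by computing $\widehat{K_M^2}(m)$ from the convolution and comparing it to $(M/2)(1-|m|/M)$; in the continuous analogue $\widehat{K_M^2}(\mu M)/M \approx b(\mu) = 2/3 - \mu^2 + \mu^3/2$, and writing $b(\mu) = (1-\mu)(-\mu^2/2 + \mu/2 + 1/2) + 1/6$ makes transparent that $b(\mu)/(1-\mu) \geq 2/3$ on $[0,1)$, with equality only at $\mu = 0$. The remaining pointwise estimates on the Fej\'er kernel are classical.
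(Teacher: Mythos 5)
Your construction takes a genuinely different route from the paper's, though both are built on the Fej\'er kernel $K_M$. The paper sets $G_N(\theta)=F_N(\theta+\tfrac{\pi}{2N})+F_N(\theta-\tfrac{\pi}{2N})$ and takes $P=G_N-G_{\lfloor N/4\rfloor}$; non-negativity of $\widehat P$ is then immediate because both factors in $\widehat{G_N}(k)=(2-\tfrac{2|k|}{N})\cos\tfrac{k\pi}{2N}$ dominate the corresponding factors for the smaller index, and the pointwise inequality $G_N<G_{\lfloor N/4\rfloor}$ outside the arc comes from comparing the two-sided bounds on $G_N$. Your $P=K_M^2-\tfrac{M}{2}K_M-C$ with $M=\lfloor N/2\rfloor$ is an elegant alternative: the factorization $K_M(K_M-\tfrac{M}{2})-C$ reduces the sign condition to the single estimate $K_M\leqslant M/2$ off the arc, which you handle cleanly (your pointwise bound in fact suffices even for the sub--main-lobe range $4\pi/N\leqslant|\theta|<2\pi/M$, so the monotonicity-of-the-main-lobe claim can be bypassed). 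Both constructions land in the same degree window and handle small $N$ by trivial cases.

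The place where your write-up falls short of a proof is the non-negativity of the coefficients. You reduce it to the statement that $\widehat{K_M^2}(m)/\widehat{K_M}(m)$ is minimized at $m=0$, but you only verify the continuous surrogate $b(\mu)/(1-\mu)\geqslant 2/3$; the phrase ``one verifies'' is carrying the entire discrete inequality $\widehat{K_M^2}(m)\geqslant\tfrac{M}{2}(1-\tfrac{m}{M})$ for $0\leqslant m\leqslant M-1$. That inequality is true, but the passage from the continuous model to the discrete one is not automatic (the discrete autocorrelation is not a monotone Riemann sum of $b$, and the margin $2/3>1/2$ must be checked to survive the discretization). To close the gap you can compute $\widehat{K_M^2}(m)$ exactly as a cubic in $m$,
$$\widehat{K_M^2}(m)=\frac{(m+1)(M-m)}{M}+\frac{(m+1)m(m-1)}{6M^2}+\frac{(M-1-m)(M-m)(2M+m-1)}{3M^2},$$
and verify directly that it dominates $(M-m)/2$; this is elementary but is precisely the work your sketch omits, and it is the step where the paper's construction is structurally simpler.
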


We believe that Claim \ref{claim:existence-trigpol} (or its sharper versions) is well known to experts. We postpone our proof of the claim to the end of the section. The combination of Observation \ref{obs:main} and Claim \ref{claim:existence-trigpol} implies the following corollary:

\begin{corollary} \label{cor:main}
Let $ F(\theta) =  \sum_{n=-\infty}^{\infty} a_n e^{in\theta} $ be an $ N $-good $ L^2[-\pi,\pi] $ Fourier series. Then
\begin{equation} \label{eq:L2-main}
\int_{-\pi}^\pi |F(\theta)|^2 \, d\theta \leqslant C \int_{-\frac{4\pi}{N}}^{\frac{4\pi}{N}} |F(\theta)|^2 \, d\theta, 
\end{equation}
where $ C = C(N) $.
\end{corollary}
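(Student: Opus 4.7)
The plan is to combine the two preceding results directly. By Claim \ref{claim:existence-trigpol}, fix a symmetric trigonometric polynomial $P(\theta) = \sum_{k=-N+1}^{N-1} c_k e^{ik\theta}$ with non-negative coefficients such that $P(\theta) < 0$ on $E := [-\pi,\pi] \setminus (-\tfrac{4\pi}{N}, \tfrac{4\pi}{N})$. Since $F$ is $N$-good, Observation \ref{obs:main} gives
\[
\int_{-\pi}^{\pi} |F(\theta)|^2 P(\theta)\, d\theta \;\geqslant\; 0.
\]
This is where the $N$-goodness is used: the phase condition forces $\cos(\phi_{n+k}-\phi_n) \geqslant 0$ for $0 \leqslant k \leqslant N-1$, hence $\Re(a_n \overline{a}_{n+k}) \geqslant 0$, so each term in the double sum in Observation \ref{obs:main} is non-negative.

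Next I would split the integral at $|\theta| = \tfrac{4\pi}{N}$ and estimate the two pieces using that $P$ is a continuous function on a compact set. Define
\[
M := \max_{\theta \in [-\pi,\pi]} |P(\theta)|, \qquad m := -\max_{\theta \in E} P(\theta) > 0,
\]
both of which exist and are finite since $P$ is continuous and $E$ is compact, with $P < 0$ on $E$. Then
\[
0 \;\leqslant\; \int_{-4\pi/N}^{4\pi/N} |F(\theta)|^2 P(\theta)\, d\theta + \int_{E} |F(\theta)|^2 P(\theta)\, d\theta
\;\leqslant\; M \int_{-4\pi/N}^{4\pi/N} |F(\theta)|^2\, d\theta - m \int_{E} |F(\theta)|^2\, d\theta,
\]
which rearranges to
\[
\int_{E} |F(\theta)|^2\, d\theta \;\leqslant\; \frac{M}{m} \int_{-4\pi/N}^{4\pi/N} |F(\theta)|^2\, d\theta.
\]
Adding $\int_{-4\pi/N}^{4\pi/N}|F|^2$ to both sides yields the desired inequality \eqref{eq:L2-main} with $C = 1 + M/m$, which depends only on $P$ and hence only on $N$.

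There is no real obstacle here: the argument is a one-line dichotomy once the non-negativity from Observation \ref{obs:main} and the sign behaviour from Claim \ref{claim:existence-trigpol} are in place. The only point that needs a sentence of care is the remark that $m > 0$, which follows from compactness of $E$ together with the strict inequality $P(\theta) < 0$ on $E$ provided by the claim; this is what allows $M/m$ to be a finite constant depending only on $N$.
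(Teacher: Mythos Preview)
Your proof is correct and is exactly the combination of Observation~\ref{obs:main} and Claim~\ref{claim:existence-trigpol} that the paper indicates (the paper does not write out the details, but your splitting-and-bounding argument is the intended one). The only minor remark is that your explanatory sentence about $\Re(a_n\overline{a}_{n+k})\geqslant 0$ is really the justification of Observation~\ref{obs:main} rather than an additional step needed here, but it does no harm.
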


\begin{remark} \label{rem:Wiener}
An analogous to $(\ref{eq:L2-main})$ inequality was obtained by N. Wiener in \cite{W} for Fourier series having ``uniform gaps'', with a sharp growth rate of the corresponding coefficient $ C $ (see Theorem I therein).  In addition, Wiener showed an analogous to $(\ref{eq:L2-main})$ inequality for Fourier series with non-negative coefficients (see Theorem 2.1 in \cite{S}), again with a sharp growth rate of the corresponding coefficient. It would be also interesting to find the sharp growth rate of $ C(N) $ in terms of $ N $ in Corollary \ref{cor:main}. Our proof gives $ C(N) = O(N^2) $.
\end{remark}

Our proof of Theorem \ref{thm:fabry-quotient} uses the following simple claim:

\begin{claim} \label{claim:Poincare}
If $ v : \mathbb{R} \rightarrow \mathbb{C} $ is a $ 2\pi $-periodic continuously differentiable function having zero mean on $ [-\pi,\pi] $ (i.e. $ \int_{-\pi}^{\pi} v(t) \, dt = 0 $), then $ \max_{t \in [-\pi,\pi]} |v(t)| \leqslant \sqrt{ \frac{\pi}{2} \int_{-\pi}^{\pi} |v'(t)|^2 \, dt } $. 
\end{claim}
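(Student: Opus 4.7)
The plan is to reduce to a real-valued function, then combine an intermediate-value argument with Cauchy--Schwarz. Let $t^* \in [-\pi,\pi]$ be a point where $|v|$ attains its maximum $M$; the claim is trivial if $M=0$, so assume $M>0$. Multiplying $v$ by the unit constant $\overline{v(t^*)}/M$ preserves the hypotheses and both sides of the target inequality, so I may assume $v(t^*) = M > 0$. Set $u = \Re v$: it is $C^1$, $2\pi$-periodic, has zero mean on $[-\pi,\pi]$, satisfies $u(t^*) = M$ (since $M = \max|v| \geq \max u \geq u(t^*) = M$), and $|u'(t)| \leq |v'(t)|$. It therefore suffices to prove $M^2 \leq \tfrac{\pi}{2} \int_{-\pi}^\pi |u'(t)|^2\, dt$.

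Next I locate zeros of $u$ on both sides of $t^*$. Regarded as a continuous $2\pi$-periodic function on $\mathbb{R}$, $u$ cannot be positive on an entire period (else the mean would be positive), so together with $u(t^*)>0$ and the intermediate value theorem one obtains $t_1 < t^* < t_2$ with $u(t_1) = u(t_2) = 0$; I take these to be the closest such points to $t^*$. The key geometric estimate is
\[
t_2 - t_1 \leq 2\pi,
\]
because otherwise $(t_1, t_2)$ would contain a full period on which $u$ is strictly positive by minimality of $t_1, t_2$, contradicting the zero-mean condition.

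Now writing $M = u(t^*) - u(t_1) = \int_{t_1}^{t^*} u'(t)\, dt$ and $M = u(t^*) - u(t_2) = -\int_{t^*}^{t_2} u'(t)\, dt$, I add the two identities to obtain $2M = \int_{t_1}^{t_2} \sigma(t)\, u'(t)\, dt$, where $\sigma \equiv 1$ on $[t_1, t^*]$ and $\sigma \equiv -1$ on $(t^*, t_2]$. The Cauchy--Schwarz inequality together with the $2\pi$-periodicity and non-negativity of $|u'|^2$ then yields
\[
(2M)^2 \leq (t_2 - t_1) \int_{t_1}^{t_2} |u'(t)|^2\, dt \leq 2\pi \int_{-\pi}^\pi |u'(t)|^2\, dt,
\]
which, together with $|u'| \leq |v'|$, gives the claim.

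There is no serious obstacle; the one delicate ingredient is the bound $t_2 - t_1 \leq 2\pi$, which is precisely what produces the sharp constant $\pi/2$ in the inequality. Using only a single zero of $u$ and a one-sided estimate would give the weaker constant $\pi$, so the symmetric use of zeros on both sides of $t^*$ is essential.
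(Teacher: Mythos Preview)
Your argument is correct. The reduction to the real part, the existence and location of the nearest zeros $t_1<t^*<t_2$, the bound $t_2-t_1\le 2\pi$ (since otherwise $u>0$ on a full period, contradicting zero mean), and the final Cauchy--Schwarz step all go through; note that $t_2-t_1\le 2\pi$ is exactly what lets you replace $\int_{t_1}^{t_2}|u'|^2$ by the integral over one period.

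Your route, however, is genuinely different from the paper's. The paper never passes to a real-valued function or invokes the intermediate value theorem; instead it first proves the $L^1$ bound $\max|v|\le\tfrac12\int_{-\pi}^{\pi}|v'|$ directly for the complex-valued $v$, using the ``go both ways around the circle'' identity
\[
v(T)-v(t)=\tfrac12\Big(\int_t^T v' - \int_T^{t+2\pi} v'\Big)
\]
to get $|v(T)-v(t)|\le\tfrac12\int_{-\pi}^{\pi}|v'|$, and then averages over $t$ using the zero-mean hypothesis. Cauchy--Schwarz is applied only afterwards, turning the $L^1$ bound into the stated $L^2$ bound. The paper's approach thus yields the stronger intermediate inequality $\max|v|\le\tfrac12\|v'\|_{L^1}$ (with its pleasant geometric interpretation: the distance from any point of a closed curve to its barycenter is at most half its length), while your approach is more direct for the $L^2$ statement itself and makes transparent why the factor $2$ appears --- it comes from using zeros on \emph{both} sides of the maximum rather than one.
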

\begin{proof}
Let us first show the inequality
\begin{equation} \label{eq:geom}
\max_{t \in [-\pi,\pi]} |v(t)| \leqslant \frac{1}{2} \int_{-\pi}^{\pi} |v'(t)| \, dt. 
\end{equation}
Given $(\ref{eq:geom}) $, the statement of the claim readily follows by the Cauchy-Schwartz inequality. Note, that the inequality $(\ref{eq:geom})$ has a simple geometric meaning: given a continuously differentiable curve on the plane $ \mathbb{R}^2 $, the distance from its mean value to any point on the curve is not larger than half of the length of the curve. 


 For a proof of $(\ref{eq:geom})$, let us first show that for any $ t, T \in [-\pi,\pi] $ we have 
\begin{equation} \label{eq:geomprelim}
|v(T)-v(t)| \leqslant \frac{1}{2} \int_{-\pi}^{\pi} |v'(s)| \, ds.  
\end{equation}
Indeed, we can WLOG assume that $ -\pi \leqslant t \leqslant T \leqslant \pi $, and then 
\begin{equation*}
\begin{gathered}
|v(T) - v(t)| = \frac{1}{2} \left| \int_t^T v'(s) \, ds - \int_T^{t+2\pi} v'(s) \, ds \right| \\
\leqslant \frac{1}{2} \left( \int_t^T |v'(s)| \, ds + \int_T^{t+2\pi} |v'(s)| \, ds \right) = \frac{1}{2} \int_{-\pi}^{\pi} |v'(s)| \, ds. 
\end{gathered}
\end{equation*}
Now, having $(\ref{eq:geomprelim})$, we conclude that for each $ T \in [-\pi,\pi] $, 
\begin{equation*}
\begin{gathered}
|v(T)| = \left| \frac{1}{2\pi} \int_{-\pi}^{\pi} v(T) - v(t) \, dt \right| \leqslant  \frac{1}{2\pi} \int_{-\pi}^{\pi}  |v(T) - v(t)| \, dt 
\leqslant  \frac{1}{2} \int_{-\pi}^{\pi} |v'(s)| \, ds, 
\end{gathered}
\end{equation*}
and $(\ref{eq:geom})$ follows.
\end{proof}

We now pass to the proof of Theorem \ref{thm:fabry-quotient}.

\begin{proof}[Proof of Theorem \ref{thm:fabry-quotient}]

We will in fact prove a more general quantitative statement: If $ f(z) = \sum_{n=0}^\infty a_n z^n $ is an $ N $-good Taylor series with radius of convergence $ R=1 $, then $ f $ cannot be extended analytically through the arc $ \mathcal{C} = \{ e^{i\theta} \, | \, \theta \in [-\frac{4\pi}{N},\frac{4\pi}{N}] \} $, i.e. $ f $ is not analytic on $ \{ |z| < 1 \} \cup \mathcal{C} $. 

To deduce the theorem from this statement, first notice that in the theorem one can WLOG assume that $ s = 1 $, by change of variables $ w = z/s $. Then, the assumption of the theorem gives us $ \lim_{n \rightarrow \infty} \frac{a_n}{a_{n+1}} = 1 $, and in particular, for every integer $ N \geqslant 2 $ there exists $ m \in \mathbb{N} $ such that $ f_m(z) := \sum_{n=m}^\infty a_n z^n $ is $ N $-good and has radius of convergence $ R = 1 $. Therefore, assuming the statement, we conclude that $ f_m $, and hence $ f $ as well, cannot be extended analytically through the arc $ \mathcal{C} = \{ e^{i\theta} \, | \, \theta \in [-\frac{4\pi}{N},\frac{4\pi}{N}] \} $. Since this holds for every integer $ N \geqslant 2 $, the theorem follows.

Let us now pass to our proof of the above quantitative statement. The proof is by contradiction, so assume that we have an $ N $-good Taylor series $ f(z) = \sum_{n=0}^\infty a_n z^n $ with radius of convergence $ R=1 $, such that $ f $ is holomorphic on $ \{ |z| < 1 \} \cup \mathcal{C} $, where $ \mathcal{C} = \{ e^{i\theta} \, | \, \theta \in [-\frac{4\pi}{N},\frac{4\pi}{N}] \} $.

Consider the $ 2\pi $-periodic complex function $ F(w) := f(e^{iw}) $. Since $ f $ is analytic in a neighbourhood of $ \{ |z| < 1 \} \cup \mathcal{C} $ (where $ \mathcal{C} = \{ e^{i\theta} \, | \, \theta \in [-\frac{4\pi}{N},\frac{4\pi}{N}] \} $ as before), we conclude that $ F $ is analytic in some neighbourhood $ \Omega $ of $ \{ \Im w > 0 \} \cup [-\frac{4\pi}{N},\frac{4\pi}{N}] $. For $ w \in [-\frac{4\pi}{N},\frac{4\pi}{N}] \times [0,\infty] \subset \Omega $, the distance from $ w $ to the boundary of $ \Omega $ is bounded from below by a positive constant. Therefore, by the Cauchy estimates we have $ |F^{(\ell)}(w)| \leqslant C_1 A^\ell \ell! $ for any $ w \in [-\frac{4\pi}{N},\frac{4\pi}{N}] \times [0,\infty] $ and $ \ell \geqslant 0 $. Now, for $ \tau > 0 $, define $ F_\tau(\theta) := F(\theta + i\tau) = f(e^{-\tau + i\theta}) $, $ \theta \in \mathbb{R} $. We conclude 
\begin{equation} \label{eq:Cauchy-est}
|F_\tau^{(\ell)}(\theta)| = |F^{(\ell)}(\theta + i\tau)| \leqslant C_1 A^\ell \ell! 
\end{equation}
for $ \theta \in [-\frac{4\pi}{N},\frac{4\pi}{N}] $ and $ \tau > 0 $.

For any given $ \tau > 0 $, the function $ F_\tau $ admits the Fourier series $ F_\tau(\theta) = f(e^{-\tau + i\theta}) = \sum_{n=0}^\infty a_n e^{-n\tau} e^{in\theta} $, which is $ N $-good. Pick some integer $ \ell \geqslant 1 $, and apply Corollary \ref{cor:main} to the Fourier series $ F_\tau^{(\ell)}(\theta) = i^\ell \sum_{n=0}^\infty n^\ell a_n e^{-n\tau} e^{in\theta} $ (which is $ N $-good as well). Hence $ F_\tau^{(\ell)} $ satisfies the inequality $(\ref{eq:L2-main})$. By the estimate $(\ref{eq:Cauchy-est})$ we conclude
\begin{equation*} 
\int_{-\pi}^\pi |F_\tau^{(\ell)}(\theta)|^2 \, d\theta \leqslant C \int_{-\frac{4\pi}{N}}^{\frac{4\pi}{N}} |F_\tau^{(\ell)}(\theta)|^2 \, d\theta \leqslant (C_2 A^\ell \ell!)^2 ,
\end{equation*}
for $ \tau > 0 $. Since $ F_\tau^{(\ell)} $ has a zero mean on $ [-\pi,\pi] $, applying Claim \ref{claim:Poincare} with $ v = F_\tau^{(\ell)} $, we get $$ \max_{\theta \in [-\pi,\pi]} |F_\tau^{(\ell)}(\theta)| \leqslant \sqrt{\frac{\pi}{2} \int_{-\pi}^\pi |F_\tau^{(\ell + 1)}(\theta)|^2 \, d\theta} \leqslant \sqrt{\frac{\pi}{2}} C_2 A^{\ell+1} (\ell+1)! 
\leqslant C_3 (2A)^\ell \ell! $$ for every $ \tau > 0 $. In other words, we have the estimate $ |F^{(\ell)}(w)| \leqslant C_3 (2A)^\ell \ell! $ for $ \Im w > 0 $ and $ \ell \geqslant 1 $. Consequently, $ F(w) $ admits an analytic continuation from the upper half-plane $ \{ \Im w > 0 \} $ through its boundary $ \partial \{ \Im w > 0 \} = \mathbb{R} $, hence $ f (z) $ is holomorphic in the closed disc $ \{ |z| \leqslant 1 \} $, contradiction.
\end{proof}

\begin{remark}
In the quantitative statement appearing at the beginning of the proof of Theorem \ref{thm:fabry-quotient}, one can clearly weaken the assumption of $N$-goodness for $  f(z) = \sum_{n=0}^\infty a_n z^n $ to an ``eventual $N$-goodness", i.e. assuming that we have $ a_n = r_n e^{i \phi_n} $, $ r_n \geqslant 0 $, $ \phi_n \in \mathbb{R} $, such that for {\em sufficiently large} $ n $ there exists some $ \Phi_n \in \mathbb{R} $ with $ \phi_n, \phi_{n+1}, \ldots, \phi_{n+N-1} \in [\Phi_n, \Phi_n + \frac{\pi}{2}] $. Moreover, the quantitative statement is not sharp, and we expect that under the assumption that $ f(z) = \sum_{n=0}^\infty a_n z^n $ has unit radius of convergence and is $ N $-good, one can conclude that $ f $ cannot be extended analytically through the shorter arc $ \mathcal{C}' = \{ e^{i\theta} \, | \, \theta \in [-\frac{\pi}{2N-2},\frac{\pi}{2N-2}] \} $. 

In fact, it is natural to consider the following more general situation. Let $ \alpha \in (0,\pi) $, and let $ f(z) = \sum_{n=0}^\infty a_n z^n $ be a Taylor series with radius of convergence $ R = 1 $, whose coefficients can be written as $ a_n = r_n e^{i \phi_n} $, $ r_n \geqslant 0 $, $ \phi_n \in \mathbb{R} $, such that for sufficiently large $ n $ there exists some $ \Phi_n \in \mathbb{R} $ with $ \phi_n, \phi_{n+1}, \ldots, \phi_{n+N-1} \in [\Phi_n, \Phi_n + \alpha] $. Then we expect that $ f $ cannot be extended analytically through the arc $ \{ e^{i\theta} \, | \, \theta \in [-\frac{\alpha}{N-1},\frac{\alpha}{N-1}] \} $ (cf. Delange's theorem \cite[Theorem 2.3.3]{B}, \cite{De}). 
\end{remark}

\newpage
It remains to prove Claim \ref{claim:existence-trigpol}.

\begin{proof}[Proof of Claim \ref{claim:existence-trigpol}]
First of all, we can WLOG assume that $ N \geqslant 8 $ (for $ 2 \leqslant N \leqslant 7 $ just take $ P(\theta) = \cos \theta = \frac{1}{2}e^{-i\theta} + \frac{1}{2}e^{i\theta} $). Consider the Fej\'er kernel $$ F_N(\theta) = \sum_{k=-N+1}^{N-1} \left( 1-\frac{|k|}{N} \right) e^{ik\theta} = \frac{1}{N} \left( \frac{\sin \frac{N\theta}{2}}{\sin \frac{\theta}{2}} \right)^2 .$$ Define $$ G_N(\theta) := F_N ( \theta + \frac{\pi}{2N} ) + F_N(\theta - \frac{\pi}{2N}) = \sum_{k=-N+1}^{N-1} \left( 2-\frac{2|k|}{N} \right) (\cos \frac{k\pi}{2N}) e^{ik\theta} .$$ We have $$  G_N(\theta) = \frac{1}{N} \left( \left( \frac{\sin (\frac{N\theta}{2} - \frac{\pi}{4})}{\sin ( \frac{\theta}{2} - \frac{\pi}{4N})} \right)^2 +  \left( \frac{\sin (\frac{N\theta}{2} + \frac{\pi}{4})}{\sin ( \frac{\theta}{2} + \frac{\pi}{4N})} \right)^2 \right) .$$ Hence 
\begin{equation*}
\begin{gathered}
A_N(\theta) := N^{-1} \left( \max \left( |\sin ( \frac{\theta}{2} - \frac{\pi}{4N})|, |\sin (  \frac{\theta}{2} + \frac{\pi}{4N})| \right) \right)^{-2} \\
 \leqslant G_N (\theta)  \\
\leqslant B_N(\theta) := N^{-1} \left( \min \left( |\sin ( \frac{\theta}{2} - \frac{\pi}{4N})|, |\sin ( \frac{\theta}{2} + \frac{\pi}{4N})| \right) \right)^{-2}
\end{gathered}
\end{equation*}

Let us show that $ G_N(\theta) < G_{\left[ \frac{N}{4} \right]} (\theta) $ for $ \theta \in [-\pi,\pi] \setminus (-\frac{4\pi}{N},\frac{4\pi}{N}) $. For this,
it is enough to show that $ B_N(\theta) < A_{\left[ \frac{N}{4} \right]} (\theta) $ for $ \theta \in [\frac{4\pi}{N},\pi] $. The latter inequality immediately follows from

\begin{equation} \label{eq:denom}
\begin{gathered}
C_N(\theta) := 2 \min \left( \sin ( \frac{\theta}{2} - \frac{\pi}{4N}), \sin ( \frac{\theta}{2} + \frac{\pi}{4N}) \right)  \\
> D_N(\theta) := \max \left( \sin ( \frac{\theta}{2} - \frac{\pi}{4\left[ \frac{N}{4} \right]}), \sin ( \frac{\theta}{2} + \frac{\pi}{4\left[ \frac{N}{4} \right]}) \right).
\end{gathered}
\end{equation}

Let us show the inequality $(\ref{eq:denom})$ for $ \theta \in [\frac{4\pi}{N},\pi] $. If $ \theta \in [\frac{\pi}{2},\pi] $ then $ \frac{\theta}{2} \pm \frac{\pi}{4N} \in (\frac{\pi}{6},\frac{5\pi}{6}) $ and hence $ C_N(\theta) > 1 \geqslant D_N(\theta) $. Finally, if $ \theta \in [\frac{4\pi}{N}, \frac{\pi}{2}] $ then 

\begin{equation*}
\begin{gathered}
C_N(\theta) = 2\sin(\frac{\theta}{2}-\frac{\pi}{4N}) \geqslant 2\sin(\frac{\theta}{2}-\frac{\pi}{4N}) \cos(\frac{\theta}{2}-\frac{\pi}{4N}) \\
= \sin(\theta-\frac{\pi}{2N}) > \sin(\frac{\theta}{2}+\frac{\pi}{4\left[ \frac{N}{4} \right]}) = D_N(\theta).
\end{gathered}
\end{equation*}

Now put $ P(\theta) := G_N(\theta) - G_{\left[ \frac{N}{4} \right]} (\theta) $. Then $ P(\theta) $ has all the needed properties (also note that $ P(\theta) $ has a zero mean on $ [-\pi,\pi] $).
\end{proof}

\bigskip
\noindent Lev Buhovski\\
School of Mathematical Sciences, Tel Aviv University \\
{\it e-mail}: levbuh@tauex.tau.ac.il
\bigskip

\end{document}